\newtheorem{theorem}{Theorem}
\newtheorem{remark}{Remark}
\begin{document}

\begin{frontmatter}

\title{Stability Analysis of a Diffusive SVIR Epidemic Model with Distributed Delay, Imperfect Vaccine and General Incidence Rate}

\author[1]{Achraf Zinihi}
\ead{a.zinihi@edu.umi.ac.ma}

\author[1]{Mostafa Tahiri}
\ead{m.tahiri@umi.ac.ma}

\author[1]{Moulay Rchid Sidi Ammi\corref{Corr}}
\cortext[Corr]{Corresponding author}
\ead{rachidsidiammi@yahoo.fr}

\address[1]{Department of Mathematics, MAIS Laboratory, AMNEA Group, Faculty of Sciences and Technics,\\
Moulay Ismail University of Meknes, Errachidia 52000, Morocco}


\begin{abstract}
In this chapter, we consider a reaction-diffusion SVIR infection model with dis-tributed delay and nonlinear incidence rate. The wellposedness of the proposed model is proved. By means of Lyapunov functionals, we show that the disease-free equilibrium state is globally asymptotically stable when the basic reproduction number is less or equal than one, and that the disease endemic equilibrium is globally asymptotically stable when the basic reproduction number is greater than one. Numerical simulations are provided to illustrate the obtained theoretical results.
\end{abstract}

\begin{keyword}
Vaccination \sep Lyapunov Function, \sep Distributed Delay \sep Generalized Incidence Rate \sep reaction-diffusion.
\end{keyword}

\journal{\href{https://sites.google.com/view/imame24/call-of-papers}{CRC Press Book in Mathematical Analysis}}

\date{May 16, 2024}


\end{frontmatter}


\section{Introduction}\label{S1}
The global health security remains under the persistent threat of infectious diseases \cite{Marinov2022}. In the last few decades, a plethora of mathematical models have emerged to evaluate and address the transmission dynamics of infectious diseases worldwide \cite{Annas2020, Chen2020}. These modeling frameworks play a pivotal role in diverse domains, including policy formulation, public health readiness, risk assessment, and the evaluation of control programs \cite{Salman2021, Yang2022, Zinihi2024}. Commonly, these models represent the infection status of individuals through the categorization of populations into various compartments, such as susceptible, infected, removed, and other relevant categories like vaccinated. These compartments are typically structured as ODEs, wherein the progression of infection takes place among these compartments. However, such compartmental ODE systems often neglect the impact of population movement processes and spatial heterogeneity. Recent advancements in the field have introduced reaction–diffusion models utilizing PDEs \cite{SidiAmmi2020, SidiAmmi2023}, integrating the spatial dispersal component into the ODE systems. This incorporation enables the exploration of disease spread across diverse geographical regions and the examination of spatial heterogeneity's influence on transmission patterns.

A prominent instance of the PDE model is the reaction-diffusion SVIR epidemiological system \cite{Wang2021, Zhang2019}, serving as a widely employed mathematical framework for understanding the dynamics of infectious diseases. This system encapsulates SVIR type kinetics, delineating the population's evolution through four compartments: Susceptible (S), Vaccinated (V), Infected (I), and Removed (R). The SVIR model's dynamics are subject to various influencing factors, including disease transmission rate \cite{Jamiluddin2021}, and the recovery (or treatment) rate of infected individuals \cite{Zhang2019}. Numerical simulations have become a frequent choice for studying this epidemiological model, enabling researchers to analyze disease dynamics. Stability analysis, has also found application in comprehending the qualitative characteristics of dynamical systems, encompassing both ODE models \cite{Duan2014} and PDE systems \cite{Ahmed2021}. Through the examination of stability properties, valuable insights can be derived into a dynamical system's long-term behavior, elucidating aspects such as convergence to equilibrium, manifestation of damped oscillatory behavior, and other dynamic patterns \cite{Wang2021, Zhang2019}. However, when extending stability analysis to PDE systems using numerical methods, challenges arise in terms of accuracy and complexity \cite{Mohd2019}. The traditional approach involves discretizing spatial derivatives in the PDE model and estimating eigenvalues from a considerably large discrete system. This method can be computationally demanding and susceptible to errors arising from spectral intricacies introduced during the discretization process \cite{Ahmed2021}.

The exploration of a reaction-diffusion SVIR model with distributed delay offers a compelling avenue for research with multifaceted motivations. Distributed delays in epidemic models are instrumental in capturing the intricate interplay between the incubation period of disease and the spatial dispersal of individuals. By incorporating distributed delays, the model accommodates variations in the time it takes for an infected individual to transmit the disease to others, thus reflecting more realistic scenarios in the context of infectious disease dynamics. This approach enables a nuanced examination of the spatial and temporal aspects of disease spread, providing insights into how delayed transmission influences the overall epidemiological landscape. Studying such a model is essential for gaining a deeper understanding of the impact of spatial heterogeneity, time delays, and diffusion processes on the persistence, propagation, and control of infectious diseases.\\
The rate of disease incidence, a critical factor in epidemic modeling, denotes the number of new infections occurring per unit of time. Traditionally, this rate has been assumed to exhibit bilinear dependence on the number of susceptible (S) and infected (I) individuals, and sometimes vaccinated individuals (V) and infected individuals (I). However, alternative functions have been proposed to capture disease transmission dynamics. Generally, these functions can be expressed as $f(I)$. Various specific forms of these incidence functions have been explored in the field of mathematical epidemiology, including:
\begin{itemize}
\item $\beta I$ (bilinear) \cite{Anderson1979};

\item $\beta e^{-m I} I$ with $m > 0$ \cite{Cui2007};

\item $\beta \frac{I}{1 + a_1 I}$ (saturated with respect to infectives) \cite{Kumar2018};

\item $\beta \frac{I}{1 + \omega_1 I + \omega_2 I^2}$ with $\omega_1, \omega_2 > 0$ \cite{Zhou2007};
\end{itemize}
where $\beta$ transmission rate for the susceptible or the vaccinated individuals.

The subsequent sections of this manuscript are structured as follows. In Section \ref{S2}, the SVIR model incorporating distributed delay is presented. Section \ref{S3} is dedicated to establishing the existence and uniqueness of a global solution for the proposed model. Section \ref{S4} encompasses the computation of the basic reproduction number, examining model persistence, and investigating the existence of endemic equilibria. The focus then shifts to the Global stability analysis of both the disease-free equilibrium and the endemic equilibrium in Section \ref{S5}. To complement the theoretical findings, Section \ref{S6} presents a series of numerical simulations. A closure for this work will be presented in Section \ref{S7}.


\section{Mathematical model}\label{S2}

Depending on what we discussed earlier, our proposed model is defined by
\begin{equation}\label{E2.1}
\begin{cases}
\begin{aligned}
\frac{\partial S(t, x)}{\partial t}-d_S \Delta S(t, x) =& \Lambda-S(t, x) \int_0^k g(\tau) f(I(t-\tau, x)) d \tau-(\mu+\alpha) S(t, x),\\
\frac{\partial V(t, x)}{\partial t}-d_V \Delta V(t, x) =& \alpha S(t, x)-V(t, x) \int_0^k g(\tau) h(I(t-\tau, x)) d \tau-\left(\gamma_1+\mu\right) V(t, x),\\
\frac{\partial I(t, x)}{\partial t}-d_I \Delta I(t, x) =& V(t, x) \int_0^k g(\tau) h(I(t-\tau, x)) d \tau\\
&+S(t, x) \int_0^k g(\tau) f(I(t-\tau, x)) d \tau-(\gamma+\mu+c) I(t, x),\\
\frac{\partial R(t, x)}{\partial t}-d_R \Delta R(t, x) =& \gamma_1 V(t, x)+\gamma I(t, x)-\mu R(t, x),\\
\end{aligned}
\end{cases}
\end{equation}
with
\begin{equation}\label{E2.2}
\frac{\partial S(t, x)}{\partial \nu}=\frac{\partial V(t, x)}{\partial \nu}=\frac{\partial I(t, x)}{\partial \nu}=\frac{\partial R(t, x)}{\partial \nu}=0, \quad x \in \partial \Omega,
\end{equation}
where $t>0$, $\Omega$ is a bounded domain in $\mathbb{R}^n$ with smooth boundary $\partial \Omega$, $\nu$ is the outward normal to $\partial \Omega,$ $  d_S>0,$ $d_V>0,$ $d_I>0$ and $d_R>0$ stand for the diffusion rates, $S(t,x), V(t,x), I(t,x), R(t,x)$ denote the number of susceptible, vaccinated, infected and recovered individuals at time $t$ and position $x$, respectively. $\Lambda$ is the recruitment rate of new susceptibles, $\mu$ is the natural death rate of the population, $\gamma$ represents the natural recovery rate of infective individuals, $\alpha$ is the vaccination rate. $\frac{1}{\gamma_1}$ is the approximative time spend in V-class before getting immunity. $\frac{1}{\gamma}$ the average time of the infectious period. Individuals leave S-class with the following rate
$$
S(t, x) \int_0^k g(\tau) f(I(t-\tau, x)) d \tau,
$$
and leave V-classe with the rate
$$
V(t, x) \int_0^k g(\tau) h(I(t-\tau, x)) d \tau,
$$
where $k$ represents the maximum time taken to become infectious and $g$ is a non-negative function satisfying 
$$\int_0^k g(\tau) d \tau=1.$$
The initial condition for the above system \eqref{E2.1} is give for $\theta \in[-k, 0]$ by
\begin{equation}\label{E2.3}
\begin{aligned}
\Phi(\theta)(x) & =\left(\Phi_1(\theta, x), \Phi_2(\theta, x), \Phi_3(\theta, x), \Phi_4(\theta, x)\right) \\
& =(S(\theta, x), V(\theta, x), I(\theta, x), R(\theta, x)), x \in \bar{\Omega}.
\end{aligned}
\end{equation}

The function $\Phi$ belongs $C([-k, 0], \mathbb{X})$, where $C([-k, 0], \mathbb{X})$ denotes the space of continuous functions mapping from $[-k, 0]$ to $\mathbb{X}$ equipped with the sup-norm and $\mathbb{X}=C\left(\Omega, \mathbb{R}_+^4\right)$ denotes the space of continuious functions mapping from $\Omega$ to $\mathbb{R}_+^4$.\\
Our main objective is to discuss the Global stability of the SVIR model \eqref{E2.1}. For that, we will construct suitable Lyapunov functions. Throughout this
work we assume that $f: \mathbb{R}_+ \longrightarrow \mathbb{R}_+$ and $h: \mathbb{R}_+ \longrightarrow \mathbb{R}_+$ are continuously differentiable in the interior of $\mathbb{R}_+$ with
$f(I)=0$ for $I = 0$ and $h(I)=0$ for $I = 0$
and the following hypotheses hold
$$
\begin{aligned}
& (H_1) \text{ For } I>0, \text{ we have } f(I)>0 \text{ and } h(I)>0.\\
& (H_2) \text{ If } I \geq 0, \text{ then } f^{\prime}(I)>0, \  h^{\prime}(I)>0, \ f^{\prime \prime}(I) \leq 0 \text{ and } h^{\prime \prime}(I) \leq 0.
\end{aligned}
$$
Epidemiologically, $\left(H_1\right)$ means that individuals are positive. $\left(H_2\right)$ implies that the incidences of
$$
V(t, x) \int_0^k g(\tau) h(I(t-\tau, x)) d \tau,
$$
and
$$
S(t, x) \int_0^k g(\tau) f(I(t-\tau, x)) d \tau,
$$
become faster with an increase in the number of the infectious individuals. However, the per capita infection rate will slow downbecause of a certain inhibition effect, since $f^{\prime \prime}(I) \leq 0$ and $h^{\prime \prime}(I) \leq 0$ imply that $\left(\frac{f(I)}{I}\right)^{\prime}$, $\left(\frac{h(I)}{I}\right)^{\prime}<0$.


\section{Basic properties of the model}\label{S3}

Let $A$ be the operator defined on $\mathbb{X}$ as follows
\begin{equation}\label{E3.1}
\begin{aligned}
A: D(A) \subset \mathbb{X} & \longrightarrow \mathbb{X}, \\
u & \longmapsto Au = \left(A_1 u, A_2 u, A_3 u, A_4 u\right),
\end{aligned}
\end{equation}
where
\begin{equation}\label{E3.2}
D(A):=\left\{u \in \mathbb{X}: \Delta u \in \mathbb{X}, \frac{\partial u}{\partial \nu}=0 \text { on } \partial \Omega\right\},
\end{equation}
and
$$
\begin{aligned}
& A_1 u = (d_S \Delta - \delta_1) u, \ \delta_1 = \mu + \alpha,\\
& A_2 u = (d_V \Delta - \delta_2)  u, \  \delta_2 = \mu + \gamma_1,\\
& A_3 u = (d_I \Delta - \delta_3)  u, \ \delta_3 = \mu + \gamma + c,\\
& A_4 u = (d_R \Delta - \delta_4)  u, \ \delta_4 = \mu.
\end{aligned}
$$
According to the classical theory of semi-groups of partial differential equations \cite{Pazy1983}, $A_i$ is the infinitesimal generator of a strongly continuous semi-group $\exp (t A_i)$, where $i=1,2,3,4$.\\

Let $\mathcal{X}_i(t): C\left(\Omega, \mathbb{R}\right) \longrightarrow C\left(\Omega, \mathbb{R}\right) $ be the $C_0$ semi-group associated with $A_i$ subject to \eqref{E2.2}. Then,
$$
\left(\mathcal{X}_i(t)u\right)(x) = \int_\Omega \Gamma_i(t,x,y) u(y) dy,
$$
where $\Gamma_i$ represents the Green function associated with $A_i$ subject to \eqref{E2.2}. By \cite[Corollary 7.2.3]{Smith1995}, we obtain that $\mathcal{X}(t) = \left(\mathcal{X}_1(t), \mathcal{X}_2(t), \mathcal{X}_3(t), \mathcal{X}_4(t)\right)$ is compact and strongly positive for each $t > 0$.\\
Following \cite{Luo2019, Ren2018}, there exist constants $\xi_i > 0$ such that
\begin{equation}\label{E3.3}
\|\mathcal{X}_i(t)\| \leq \xi_i e^{\lambda_i t}, \ \forall t \geq 0,
\end{equation}
where $\lambda_i$ is the principal eigenvalue of $A_i$ subject to \eqref{E2.2}.

For any function $u:[-k, \sigma) \longrightarrow \mathbb{X}$ with some $\sigma>0$, we define $u_t \in C([-k, 0], \mathbb{X})$ by $u_t(\theta)=u(t+\theta), \theta \in[-k, 0]$. Let $F$ be a function defined by
$$
\begin{aligned}
F: C([-k, 0], \mathbb{X}) & \longrightarrow C\left(\Omega, \mathbb{R}^{4}\right),\\
\phi & \longmapsto F(\phi) = \left(F_1(\phi), F_2(\phi), F_3(\phi), F_4(\phi)\right),
\end{aligned}
$$
where
$$
\begin{aligned}
F_1(\phi) =& \Lambda-\phi_1(t, x) \int_0^k g(\tau) f\left(\phi_3(t-\tau, x)\right) d \tau,\\
F_2(\phi) =& \alpha \phi_1(t, x)-\phi_2(t, x) \int_0^k g(\tau) h\left(\phi_3(t-\tau, x)\right) d \tau,\\
F_3(\phi) =& \phi_2(t, x) \int_0^k g(\tau) h\left(\phi_3(t-\tau, x)\right) d \tau+\phi_1(t, x) \int_0^k g(\tau) f\left(\phi_3(t-\tau, x)\right) d \tau,\\
F_4(\phi) =& \gamma_1 \phi_2(t, x)+\gamma \phi_3(t, x).
\end{aligned}
$$
Then, system \eqref{E2.1}-\eqref{E2.3} can be written in the following abstract form
\begin{equation}\label{E3.4}
\left\{\begin{aligned}
&\frac{d \vartheta(t)}{d t}=A \vartheta(t)+F\left(\vartheta_t\right), \ t \geq 0,\\
&\vartheta_0=\Phi,
\end{aligned}\right.
\end{equation}
where $\vartheta(t) = \left(S(t), V(t), I(t), R(t)\right)^T$ and $\Phi=(S_0, V_0, I_0, R_0)^T$. In addition, the proposed system \eqref{E2.1} can be rewritten as the integral equation
\begin{equation}\label{E3.5}
\vartheta(t) = \mathcal{X}(t)\Phi + \int_0^t \mathcal{X}(t-s) F(\vartheta_s) ds.
\end{equation}
Function $F$ is locally Lipschitz on $C([-k, 0], \mathbb{X})$, then problem \eqref{E2.1} has a unique local non-negative solution $\left(S(t), V(t), I(t), R(t)\right)^T$ for all $t \in [0, b)$.\\

\begin{remark}
The unique local positive solution becomes the global one if $b=+\infty$.\\
\end{remark}

We will now show that the local solution can be extended to a global one. Suppose that $b < +\infty$. By \cite[Theorem 2]{Martin1990}, we know that
\begin{equation}\label{E3.6}
\|\vartheta(t)\| \longrightarrow +\infty \text{ as } t \longrightarrow b. 
\end{equation}
Put 
$$
N(t) = \int_{\Omega}\{S(t, x)+V(t, x)+I(t, x)+R(t, x)\} dx, \quad \forall t \in [0, b).
$$
By the divergence theorem \cite[Theorem 3.7]{Groeger2014} and \eqref{E2.2}, we get
$$
\begin{aligned}
& \int_{\Omega} d_S \Delta S(t, x) dx = 0, \ \int_{\Omega} d_V \Delta V(t, x) dx = 0, \\
& \int_{\Omega} d_I \Delta I(t, x) dx = 0, \ \int_{\Omega} d_R \Delta R(t, x) dx =0 .
\end{aligned}
$$
Which gives
$$
\frac{d N(t)}{d t} \leq \Lambda|\Omega|-\mu N(t).
$$
By the comparison principle, there exists constants $M_1 > 0$ and $t_1 > 0$ such that $N(t) \leq M_1$ for all $t \in [t_1, b)$. Consequently,
\begin{equation}\label{E3.7}
\begin{aligned}
& \int_{\Omega} S(t, x) dx \leq M_1, \quad \int_{\Omega} V(t, x) dx \leq M_1,\\
& \int_{\Omega} I(t, x) dx \leq M_1, \quad \int_{\Omega} R(t, x) dx \leq M_1.
\end{aligned} \quad \forall t \in [t_1, b),
\end{equation}
Let $\lambda_i^j$ the eigenvalue of $A_i$ subject to \eqref{E2.2} corresponding to the eigenfunction $\varphi_i^j$, such that $0\geq -\delta_i = \lambda_i^1 > \lambda_i^2 > \lambda_i^3 > \cdots$. From \cite[Chapter 5]{Guenther1996}, we have
$$
\Gamma_i(t,x,y) = \sum_{j\geq 1} \exp(\lambda_i^j t) \varphi_i^j(x) \varphi_i^j(y).
$$
Since $\varphi_i^j$ is uniformly bounded, then there exists $\beta > 0$ such that
$$
\Gamma_i(t,x,y) \leq \beta \sum_{j\geq 1} \exp(\lambda_i^j t), \quad t > 0, \ x,y\in\Omega.
$$
Adopting a similar methodology as outlined in \cite[Page 5]{AvilaVales2021} and using \eqref{E3.3} together with \eqref{E3.5} and \eqref{E3.7}, we obtain
$$
\begin{aligned}
& S(t, x) < \infty, \quad V(t, x) < \infty,\\
& I(t, x) < \infty, \quad R(t, x) < \infty.
\end{aligned} \quad \forall (t,x) \in [0, b)\times\Omega.
$$
Which contradicts \eqref{E3.6}. Thus, $b = +\infty$.
These analyses lead to the following inevitable result.

\begin{theorem}
For any initial value function in $\mathbb{X}$, system \eqref{E2.1} has a unique non-negative solution $\left(S(t, x), V(t, x), I(t, x), R(t, x)\right)$ for all $(t,x) \in [0, \infty)\times\Omega$.
\end{theorem}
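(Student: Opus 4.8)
The plan is to split the argument into a local existence step and a global extension step, following the abstract reformulation already in place. For local existence and uniqueness, I would invoke the mild-solution framework: since $A$ generates the compact, strongly positive $C_0$-semigroup $\mathcal{X}(t)$ and $F$ is locally Lipschitz on $C([-k,0],\mathbb{X})$, the variation-of-constants equation \eqref{E3.5} admits, by the standard contraction-mapping argument of semigroup theory \cite{Pazy1983}, a unique mild solution $\vartheta(t)$ on a maximal interval $[0,b)$. Non-negativity would then follow from the strong positivity of $\mathcal{X}(t)$ together with the quasi-positivity of the reaction term: whenever a component vanishes the corresponding $F_i$ stays non-negative (for instance $F_1 = \Lambda - \phi_1\int_0^k g(\tau)f(\phi_3)\,d\tau \geq 0$ when $\phi_1 = 0$, while $F_3$ is a sum of non-negative terms), so the positive cone $\mathbb{R}_+^4$ is forward invariant.

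For the global step, I would argue by contradiction, assuming $b < +\infty$. The blow-up alternative \cite[Theorem 2]{Martin1990} then forces $\|\vartheta(t)\| \to +\infty$ as $t \to b$, as recorded in \eqref{E3.6}. To contradict this I would establish a uniform a priori bound in two stages. First, integrating the sum of the four equations of \eqref{E2.1} over $\Omega$ and applying the divergence theorem with the Neumann conditions \eqref{E2.2} to annihilate the diffusion contributions, the total mass $N(t)$ obeys the differential inequality $\frac{dN}{dt} \leq \Lambda|\Omega| - \mu N(t)$; the comparison principle then yields a time-uniform bound $N(t) \leq M_1$, and hence the $L^1(\Omega)$ bounds \eqref{E3.7} on each compartment.

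The delicate part is to upgrade these $L^1$ bounds to pointwise ($L^\infty$) bounds, since it is the sup-norm that appears in \eqref{E3.6}. Here I would exploit the smoothing property of the semigroup through its Green-function representation: writing each component via \eqref{E3.5}, bounding the kernel $\Gamma_i(t,x,y)$ by its eigenfunction expansion and invoking the exponential estimate \eqref{E3.3}, the $L^1 \to L^\infty$ regularizing effect converts the integrated bounds \eqref{E3.7} into uniform pointwise bounds on $S, V, I, R$ over $[0,b)\times\Omega$, following the scheme of \cite[Page 5]{AvilaVales2021}. This contradicts \eqref{E3.6} and forces $b = +\infty$. I expect this final $L^1 \to L^\infty$ bootstrap to be the main obstacle, because it is where the parabolic regularization must be quantified carefully: the conservation-type estimate only controls spatial averages, and one must verify that the nonlinearities $f$ and $h$, which are bounded on bounded sets by $(H_2)$, do not spoil the kernel estimates that deliver the sup-norm control.
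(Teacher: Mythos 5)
Your proposal follows essentially the same route as the paper: local existence and non-negativity from the locally Lipschitz nonlinearity and the positive $C_0$-semigroup via the mild formulation \eqref{E3.5}, then a contradiction argument using the blow-up alternative of \cite[Theorem 2]{Martin1990}, the integrated mass inequality $\frac{dN}{dt}\leq \Lambda|\Omega|-\mu N$ giving the $L^1$ bounds \eqref{E3.7}, and the Green-function kernel estimates to upgrade to pointwise bounds as in \cite{AvilaVales2021}. Your explicit quasi-positivity check for the invariance of the positive cone and your flagging of the $L^1\to L^\infty$ bootstrap as the step requiring care are reasonable elaborations of points the paper states only briefly, but the argument is the same.
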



\section{Equilibria and the basic reproduction number}\label{S4}

Since the first three equations on \eqref{E2.1} do not contain $R(t, x)$, it is sufficient to analyze the behavior of solutions to the following system
\begin{equation}\label{E4.1}
\left\{\begin{aligned}
\frac{\partial S(t, x)}{\partial t}-d_S \Delta S(t, x) =& \Lambda-S(t, x) \int_0^k g(\tau) f(I(t-\tau, x)) d \tau-(\mu+\alpha) S(t, x), \\
\frac{\partial V(t, x)}{\partial t}-d_V \Delta V(t, x) =& \alpha S(t, x)-V(t, x) \int_0^k g(\tau) h(I(t-\tau, x)) d \tau-\left(\gamma_1+\mu\right) V(t, x),\\
\frac{\partial I(t, x)}{\partial t}-d_I \Delta I(t, x) =& V(t, x) \int_0^k g(\tau) h(I(t-\tau, x)) d \tau\\
&+S(t, x) \int_0^k g(\tau) f(I(t-\tau, x)) d \tau-(\gamma+\mu+c) I(t, x),
\end{aligned}\right. \quad x \in \Omega,
\end{equation}
with
\begin{equation}\label{E4.2}
\frac{\partial S(t, x)}{\partial \nu}=\frac{\partial V(t, x)}{\partial \nu}=\frac{\partial I(t, x)}{\partial \nu}=0, \quad x \in \partial \Omega.
\end{equation}

System \eqref{E4.1} always has a disease-free equilibrium $E_0=\left(S_0, V_0, 0\right)^T$, where $S_0=\frac{\Lambda}{\mu+\alpha}$ and $V_0=\frac{\alpha \Lambda}{(\mu+\alpha)\left(\gamma_1+\mu\right)}$. Furthermore, by a simple and direct calculation, we conclude that the basic reproduction number for this proposed model \eqref{E4.1} is given by
$$
\mathcal{R}_0=\frac{S_0}{(\gamma+\mu+c)} f^{\prime}(0)+\frac{V_0}{(\gamma+\mu+c)} h^{\prime}(0).
$$

We have the following result.

\begin{theorem}
If $\mathcal{R}_0>1$, then system \eqref{E4.1} has a unique endemic equilibrium $E^*=$ $\left(S^*, V^*, I^*\right)$ with $S^*=\frac{\Lambda}{f\left(I^*\right)+\mu+\alpha}$ and $V^*=\frac{\alpha \Lambda}{\left(h\left(I^*\right)+\gamma_1+\mu\right)\left(f\left(I^*\right)+\mu+\alpha\right)}$.
\end{theorem}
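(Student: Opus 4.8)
The plan is to look for a \emph{spatially homogeneous} steady state, so that the Laplacian terms vanish and, because $\int_0^k g(\tau)\,d\tau = 1$, the distributed-delay integrals collapse to $\int_0^k g(\tau) f(I^*)\,d\tau = f(I^*)$ and likewise $\int_0^k g(\tau) h(I^*)\,d\tau = h(I^*)$. The endemic equilibrium $(S^*,V^*,I^*)$ then solves the algebraic system
\begin{align*}
0 &= \Lambda - S^* f(I^*) - (\mu+\alpha) S^*,\\
0 &= \alpha S^* - V^* h(I^*) - (\gamma_1+\mu) V^*,\\
0 &= V^* h(I^*) + S^* f(I^*) - (\gamma+\mu+c) I^*.
\end{align*}
First I would solve the first two equations for $S^*$ and $V^*$ in terms of $I^*$; this is immediate and reproduces exactly the stated formulas $S^* = \frac{\Lambda}{f(I^*)+\mu+\alpha}$ and $V^* = \frac{\alpha\Lambda}{(h(I^*)+\gamma_1+\mu)(f(I^*)+\mu+\alpha)}$. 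The whole problem thus reduces to showing that the remaining scalar equation has a unique positive solution $I^*$.

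Substituting these expressions into the third equation and dividing by $(\gamma+\mu+c)I$ (legitimate for $I>0$), I would define, for $I>0$,
\begin{equation*}
\Theta(I) = \frac{1}{\gamma+\mu+c}\left[\frac{\Lambda}{f(I)+\mu+\alpha}\cdot\frac{f(I)}{I} + \frac{\alpha\Lambda}{(h(I)+\gamma_1+\mu)(f(I)+\mu+\alpha)}\cdot\frac{h(I)}{I}\right],
\end{equation*}
so that an endemic equilibrium corresponds precisely to a root of $\Theta(I)=1$. For existence I would examine the two limits. As $I\to 0^+$, using $f(0)=h(0)=0$ one has $f(I)/I\to f'(0)$ and $h(I)/I\to h'(0)$, whence $\Theta(0^+) = \frac{S_0 f'(0)+V_0 h'(0)}{\gamma+\mu+c} = \mathcal{R}_0$; since $\mathcal{R}_0>1$ by hypothesis, $\Theta(I)>1$ for small $I$. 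As $I\to\infty$, the factors $\frac{f(I)}{f(I)+\mu+\alpha}$ and $\frac{h(I)}{h(I)+\gamma_1+\mu}$ stay bounded by $1$, so the bracket remains bounded while the explicit factor $1/I$ forces $\Theta(I)\to 0 < 1$. Continuity of $\Theta$ on $(0,\infty)$ and the intermediate value theorem then give at least one root $I^*>0$.

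Uniqueness is where the structural assumptions $(H_2)$ do the real work, and I expect this to be the main point requiring care. The idea is to show $\Theta$ is strictly decreasing on $(0,\infty)$. I would write each summand as a product of strictly positive factors: $f(I)/I$ and $h(I)/I$ are strictly decreasing because $(H_2)$ gives $(f(I)/I)'<0$ and $(h(I)/I)'<0$, while $1/(f(I)+\mu+\alpha)$ and $1/(h(I)+\gamma_1+\mu)$ are decreasing because $f'>0$ and $h'>0$ make $f$ and $h$ increasing. A finite product of positive non-increasing functions in which at least one factor is strictly decreasing is itself strictly decreasing, so both summands — and hence $\Theta$ — are strictly decreasing. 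A strictly decreasing continuous function meets the level $1$ at most once, which combined with the existence argument yields a unique $I^*>0$, and therefore unique $S^*,V^*$ through the formulas above. Finally I would observe that $S^*,V^*>0$ automatically since all denominators are positive, confirming that $E^*$ is a genuine positive endemic equilibrium.
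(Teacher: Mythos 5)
Your proposal is correct, and the existence part follows essentially the paper's route: both reduce the problem to a single scalar equation in $I$ after solving the first two equilibrium equations for $S^*$ and $V^*$, and both detect a root by comparing the behaviour near $I=0$ (where the condition $\mathcal{R}_0>1$ enters, since your $\Theta(0^+)=\mathcal{R}_0$ is exactly the paper's $H'(0)=(\gamma+\mu+c)(\mathcal{R}_0-1)>0$ after the normalisation $\Theta(I)-1=H(I)/\bigl((\gamma+\mu+c)I\bigr)$) with the behaviour as $I\to\infty$. Where you genuinely diverge is the uniqueness step. The paper computes $H'(I)$ and $H''(I)$ explicitly and then argues, rather opaquely, that a second positive root would force $h(I^*)=0$; the intended mechanism appears to be concavity of $H$, but the displayed second derivative is cluttered and the concluding sentence does not follow transparently from it. You instead divide by $I$ and observe that $\Theta$ is a sum of products of positive non-increasing factors, each product containing the strictly decreasing factor $1/(f(I)+\mu+\alpha)$ (strict because $f'>0$), so $\Theta$ is strictly decreasing and crosses the level $1$ at most once. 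This buys a shorter, verifiable argument that needs only $f'>0$, $h'>0$ and the non-increase of $f(I)/I$ and $h(I)/I$ (which follows from $f''\le 0$, $h''\le 0$ via the mean value theorem), and it sidesteps the paper's heavy second-derivative computation entirely; moreover, monotonicity of $\Theta$ simultaneously delivers existence and uniqueness in one stroke. The only point worth making explicit in a final write-up is that you are seeking spatially homogeneous equilibria (so the Laplacians vanish), which is also implicit in the paper.
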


\begin{proof}
When $\mathcal{R}_0>1$, the result is obvious. According to the first two equations of \eqref{E4.1} one can get
$$
S=\frac{\Lambda}{f(I)+\mu+\alpha}, \quad V=\frac{\alpha \Lambda}{\left(h(I)+\gamma_1+\mu\right)(f(I)+\mu+\alpha)} .
$$
Then, we have
$$
H(I)=\frac{\Lambda}{f(I)+\mu+\alpha} f(I)+\frac{\alpha \Lambda}{\left(h(I)+\gamma_1+\mu\right)(f(I)+\mu+\alpha)} h(I)-(\gamma+\mu+c) I .
$$
Obviously, $H(+\infty)=-\infty$ and $H(0)=0$. It follows from $H^{\prime}(0)>0$ that $H(I)=0$ has at least one positive solution denoted by $I^*$, where
$$
\begin{aligned}
H^{\prime}(0) &=\frac{\Lambda}{\mu+\alpha}+\frac{\alpha \Lambda h^{\prime}(0)}{\left(\gamma_1+\mu\right)(\mu+\alpha)}-(\gamma+\mu+c)\\
&=(\gamma+\mu+c)\left(\mathcal{R}_0-1\right)>0.
\end{aligned}
$$
This is equivalent to $\mathcal{R}_0>1$. Thus, \eqref{E4.1} has at least one positive solution with
$$
S^*=\frac{\Lambda}{f\left(I^*\right)+\mu+\alpha}, \quad V^*=\frac{\alpha \Lambda}{\left(h\left(I^*\right)+\gamma_1+\mu\right)\left(f\left(I^*\right)+\mu+\alpha\right)} .
$$
Now, we prove that the endemic equilibrium is unique. Note that
$$
\begin{aligned}
H^{\prime}(I)=& \ \frac{\Lambda f^{\prime}(I)(f(I)+\mu+\alpha)-\Lambda f(I) f^{\prime}(I)}{(f(I)+\mu+\alpha)^2} + \frac{\alpha \Lambda h^{\prime}(I)}{\left[\left(h(I)+\gamma_1+\mu\right)(f(I)+\mu+\alpha)\right]} \\
& -\frac{\alpha \Lambda h(I)\left[h^{\prime}(I)(f(I)+\mu+\alpha)+\left(h(I)+\gamma_1+\mu\right) f^{\prime}(I)\right]}{\left[\left(h(I)+\gamma_1+\mu\right)(f(I)+\mu+\alpha)\right]^2} -(\gamma+\mu+c)
\end{aligned}
$$
and
$$
\begin{aligned}
H^{\prime \prime}(I)=& \frac{\Lambda f^{\prime \prime}(I) \mu}{(f(I)+\mu+\alpha)^3}+\frac{\alpha \Lambda f^{\prime \prime}(I)\left(\gamma_1 \mu\right)}{(f(I)+\mu+\alpha)^2\left(h(I)+\gamma_1+\mu\right)}\\
&+\frac{\alpha \Lambda h^{\prime \prime}(I)\left(\gamma_1+\mu\right)}{\left.(f(I)+\mu+\alpha)\left(h(I)+\gamma_1\right)+\mu\right)^2} \\
&-\left[\frac{\alpha \Lambda h^{\prime}(I) f^{\prime}(I)\left(\gamma_1+\mu\right)}{\left.(f(I)+\mu+\alpha)\left(h(I)+\gamma_1\right)+\mu\right)^2}+\frac{2 \alpha \Lambda\left(h^{\prime}(I)\right)^2\left(\gamma_1+\mu\right)}{(f(I)+\mu+\alpha)\left(h(I)+\gamma_1+\mu\right)^3}\right. \\
&+\frac{2 \Lambda \mu\left(f^{\prime}(I)\right)^2}{(f(I)+\mu+\alpha)^3}+\frac{\alpha \Lambda h^{\prime}(I) f^{\prime}(I)}{(f(I)+\mu+\alpha)^2\left(h(I)+\gamma_1+\mu\right)} \\
&\left.+\frac{2 \alpha \Lambda\left(f^{\prime}(I)\right)^2\left(\gamma_1+\mu\right)}{(f(I)+\mu+\alpha)^3\left(h(I)+\gamma_1+\mu\right)}\right]
\end{aligned}
$$
By $\left(H_2\right)$, we know that $h^{\prime \prime}(I)<0$ for $I>0$. If there exists more than one positive equilibrium, then there is a point $E^*=\left(S^*, V^*, I^*\right)$ such that $h\left(I^*\right)=0$. We obtain a contradiction.
\end{proof}


\section{Global stability of equilibria}\label{S5}

In this section, we show the global asymptotic stability of the disease-free equilibrium $E_0$ of the system \eqref{E4.1} by constructing a Lyapunov functional. The following result holds.

\begin{theorem}
Under hypotheses $\left(H_1\right)$ and $\left(H_2\right)$ the disease-free equilibrium $E_0$ of system \eqref{E4.1} is globally asymptotically stable if, and only if, $\mathcal{R}_0<1$.
\end{theorem}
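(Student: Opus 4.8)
The plan is to build a Lyapunov functional centred on the infected compartment and to exploit the concavity hypotheses in $(H_2)$. First I would record the elementary but decisive consequence of $(H_2)$: since $f(0)=h(0)=0$ and $f''\le 0$, $h''\le 0$ on $\mathbb{R}_+$, the graphs of $f$ and $h$ lie below their tangent lines at the origin, so $f(I)\le f'(0)\,I$ and $h(I)\le h'(0)\,I$ for every $I\ge 0$. These linear majorants are what convert the nonlinear incidence terms into quantities governed by $\mathcal{R}_0$.

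Next I would establish asymptotic upper bounds on the susceptible and vaccinated classes. Dropping the nonnegative incidence loss in the first equation of \eqref{E4.1} gives $\partial_t S - d_S\Delta S \le \Lambda-(\mu+\alpha)S$, so the comparison principle together with the Neumann condition \eqref{E4.2} yields $\limsup_{t\to\infty}S(t,x)\le S_0$ uniformly in $x$. Feeding this into $\partial_t V - d_V\Delta V\le \alpha S-(\gamma_1+\mu)V$ gives $\limsup_{t\to\infty}V(t,x)\le V_0$. These estimates are exactly what I need so that, asymptotically, $S\le S_0$ and $V\le V_0$ inside the incidence terms.

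With these ingredients I would introduce
\[
L(t)=\int_\Omega\!\left[I(t,x)+\bigl(S_0f'(0)+V_0h'(0)\bigr)\int_0^k\! g(\tau)\!\int_{t-\tau}^{t}\! I(s,x)\,ds\,d\tau\right]dx,
\]
whose history term is designed to cancel the delayed integrals. Differentiating along \eqref{E4.1}, the diffusion contribution $\int_\Omega d_I\Delta I\,dx$ vanishes by the divergence theorem and \eqref{E4.2}; the bounds $f(I)\le f'(0)I$, $h(I)\le h'(0)I$ and $S\le S_0$, $V\le V_0$ majorize the incidence by $\bigl(S_0f'(0)+V_0h'(0)\bigr)\int_0^k g(\tau)I(t-\tau,x)\,d\tau$; and the derivative of the history term, using $\int_0^k g=1$, supplies exactly the missing $+\bigl(S_0f'(0)+V_0h'(0)\bigr)I$ while subtracting the delayed integral. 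After cancellation this leaves
\[
\frac{dL}{dt}\le\bigl(S_0f'(0)+V_0h'(0)-(\gamma+\mu+c)\bigr)\!\int_\Omega I\,dx=(\gamma+\mu+c)(\mathcal{R}_0-1)\!\int_\Omega I\,dx,
\]
which is $\le 0$ precisely when $\mathcal{R}_0\le 1$, and strictly negative unless $I\equiv 0$ when $\mathcal{R}_0<1$.

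Finally I would invoke LaSalle's invariance principle — legitimate here because the semigroup $\mathcal{X}(t)$ is compact, so orbits are precompact — to conclude that trajectories approach the largest invariant subset of $\{I\equiv 0\}$. On that set the reduced equations $\partial_t S-d_S\Delta S=\Lambda-(\mu+\alpha)S$ and $\partial_t V-d_V\Delta V=\alpha S-(\gamma_1+\mu)V$ force $S\to S_0$ and $V\to V_0$, so the only invariant set is $\{E_0\}$; combined with local stability this gives global asymptotic stability when $\mathcal{R}_0<1$. For the converse, when $\mathcal{R}_0>1$ the linearization of \eqref{E4.1} at $E_0$ has a positive principal eigenvalue (equivalently $H'(0)>0$ as in the endemic-equilibrium proof, and the disease is uniformly persistent), so $E_0$ is unstable and cannot be globally attractive. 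The main obstacle is the transient possibility $S>S_0$ or $V>V_0$, which keeps $dL/dt$ from being manifestly nonpositive before the comparison bounds take hold; this forces the argument to route through the asymptotic estimates and the invariance principle rather than a one-line sign check, and it is also the step where the compactness of $\mathcal{X}(t)$ is indispensable.
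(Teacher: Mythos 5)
Your argument is essentially correct, but it follows a genuinely different route from the paper. The paper builds a Volterra-type functional $L_1+L_2$ with $\phi(x)=x-1-\ln x$ applied to $S/S_0$ and $V/V_0$ together with history integrals of $f(I)S$ and $h(I)V$; the deviations of $S$ and $V$ from $S_0,V_0$ are then absorbed internally by the nonpositive terms $-\mu S_0\left(\phi\left(\frac{S}{S_0}\right)+\phi\left(\frac{S_0}{S}\right)\right)$ and the $\alpha S_0$ block, so that $\frac{dL}{dt}\le 0$ holds along the whole trajectory without any a priori bounds on $S$ or $V$. You instead take a linear functional in $I$ plus a delay-compensating history term, and you pay for its simplicity by first having to prove $\limsup_{t\to\infty}S\le S_0$ and $\limsup_{t\to\infty}V\le V_0$ by comparison, and then by needing LaSalle to recover the convergence of $S$ and $V$ (which the paper's functional controls directly). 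Both routes use the same two structural facts: concavity gives $f(I)\le f'(0)I$, $h(I)\le h'(0)I$, and the Neumann condition kills the diffusion terms under integration. Your version has the advantage that the sign of $\frac{dL}{dt}$ is transparently $(\gamma+\mu+c)(\mathcal{R}_0-1)\int_\Omega I\,dx$, and you at least sketch the converse direction, which the paper's proof omits entirely despite the ``if and only if'' in the statement.

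Two points you should tighten. First, ``asymptotically $S\le S_0$'' is a $\limsup$ statement, so you cannot substitute $S\le S_0$ into $\frac{dL}{dt}$ directly; the standard fix is: for every $\varepsilon>0$ there is $T_\varepsilon$ with $S\le S_0+\varepsilon$ and $V\le V_0+\varepsilon$ for $t\ge T_\varepsilon$, whence $\frac{dL}{dt}\le\bigl((S_0+\varepsilon)f'(0)+(V_0+\varepsilon)h'(0)-(\gamma+\mu+c)\bigr)\int_\Omega I\,dx$ for $t\ge T_\varepsilon$, and the strict inequality $\mathcal{R}_0<1$ lets you choose $\varepsilon$ small enough to keep the coefficient negative. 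This works only for $\mathcal{R}_0<1$ strictly, which is all the theorem asks, but it should be written out. Second, your converse handles $\mathcal{R}_0>1$ via instability of $E_0$ but says nothing about the boundary case $\mathcal{R}_0=1$, which the ``only if'' formally requires; this is a gap in the statement's coverage, though the paper itself does not address the converse at all.
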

 
\begin{proof}
To prove our result, we consider the following Lyapunov functional
$$
L(t)=\int_{\Omega} (L_1(t, x)+L_2(t, x)) d x
$$
where
$$
L_1(t, x)=I(t, x)+S_0 \phi\left(\frac{S(t, x)}{S_0}\right)+V_0 \phi\left(\frac{V(t, x)}{V_0}\right)
$$
with $\phi(x)=x-1-\ln (x)$ and
$$
L_2(t, x)=\int_0^k g(\tau) \int_{t-\tau}^\tau f(I(u, x)) S(u, x) d u+\int_0^k g(\tau) \int_{t-\tau}^\tau h(I(u, x)) V(u, x) d u d \tau.
$$
According to
$$
\ln x \leq x-1, \ S_0=\frac{\Lambda}{\alpha+\mu} \ \text{ and } \ \alpha S_0=\left(\gamma_1+\mu\right),
$$
we have
$$
\begin{aligned}
\frac{\partial L_1(t, x)}{\partial t}=&\frac{\partial I}{\partial t}+\left(1-\frac{S_0}{S}\right) \frac{\partial S}{\partial t}+\left(1-\frac{V_0}{V}\right) \frac{\partial V}{\partial t},\\
\frac{\partial L_2(t, x)}{\partial t}=&\int_0^k g(\tau)(f(I) S-f(I(t-\tau, x) S(t-\tau, x)) d \tau \\
&+\int_0^k g(\tau)(h(I) V-h(I(t-\tau, x) V(t-\tau, x)) d \tau.
\end{aligned}
$$
Thus, we get
$$
\begin{aligned}
\frac{\partial\left(L_1+L_2\right)(t, x)}{\partial t}=& d_I \Delta I+S \int_0^k g(\tau) f(I(t-\tau, x)) d \tau \\
&+V \int_0^k g(\tau) h(I(t-\tau, x)) d \tau-(\gamma+\mu+c) I \\
& +\left(1-\frac{S_0}{S}\right)\left(d_S \Delta S+\Lambda\right.\\
&-\left.S \int_0^k g(\tau) f(I(t-\tau, x)) d \tau-(\mu+\alpha) S\right)+ \\
& \left(1-\frac{V_0}{V}\right)\left(d_V \Delta V+\alpha S\right.\\
&-\left.V \int_0^k g(\tau) h(I(t-\tau, x)) d \tau-\left(\gamma_1+\mu\right) V\right) d \tau \\
& +\int_0^k g(\tau) f(I) S d \tau-\int_0^k g(\tau) f(I(t-\tau, x)) S(t-\tau, x) d \tau\\
&+\int_0^k g(\tau) h(I) V d \tau-\int_0^k g(\tau) h(I(t-\tau, x)) V(t-\tau, x) d \tau.
\end{aligned}
$$
That is to say
$$
\begin{aligned}
\frac{\partial\left(L_1+L_2\right)(t, x)}{\partial t}=&-(\mu+\alpha)\left(\frac{S}{S_0}+\frac{S_0}{S}-2\right)+\alpha S_0\left(\frac{S}{S_0}-\frac{V}{V_0}-\frac{V_0 S}{V S_0}\right)\\
&+\int_0^k g(\tau)\left(S_0 f(I(t-\tau, x))+f(I) S\right.\\
&\left.-f(I(t-\tau, x)) S(t-\tau, x)\right) d \tau +\int_0^k g(\tau)\left(V_0 h(I(t-\tau, x))\right.\\
&\left.+h(I) V-h(I(t-\tau, x)) V(t-\tau, x)\right) d \tau \\
&-(\gamma+\mu+c) I \\
\leq& \alpha S_0\left(\phi\left(\frac{S}{S_0}\right)-\phi\left(\frac{V}{V_0}\right)-\phi\left(\frac{V_0 S}{S_0 V}\right)-2 \ln \left(\frac{V S_0}{V_0 S}\right)\right) \\
&-S_0(\mu+\alpha)\left(\phi\left(\frac{S}{S_0}\right)+\phi\left(\frac{S_0}{S}\right)\right) + S_0 f^{\prime}(0) I+V_0 h^{\prime}(0) I\\
&-(\gamma+\mu+c) I\\
=& -\alpha S_0\left(\phi\left(\frac{S_0}{S}\right)+\phi\left(\frac{V}{V_0}\right)+\phi\left(\frac{V_0 S}{S_0 V}\right)+2 \ln \left(\frac{V S_0}{V_0 S}\right)\right) \\
&-\mu S_0\left(\phi\left(\frac{S}{S_0}\right)+\phi\left(\frac{S_0}{S}\right)\right)+(\gamma+\mu+c)\left(\mathcal{R}_0-1\right).
\end{aligned}
$$
From which we conclude that
$$
\begin{aligned}
\frac{d L(t)}{d t} \leq & \int_{\Omega}\left\{-\alpha S_0\left(\phi\left(\frac{S_0}{S}\right)+\phi\left(\frac{V}{V_0}\right)+\phi\left(\frac{V_0 S}{S_0 V}\right)+2 \ln \left(\frac{V S_0}{V_0 S}\right)\right)\right. \\
& -\mu S_0\left(\phi\left(\frac{S}{S_0}\right)+\phi\left(\frac{S_0}{S}\right)\right)+(\gamma+\mu+c)\left(\mathcal{R}_0-1\right) +d_I \Delta I\\
&+  \left.d_S \Delta S+d_V \Delta V-\frac{S_0}{S} d_S \Delta S-\frac{V_0}{V} d_V \Delta V\right\} d x.
\end{aligned}
$$
By Green's formula and \eqref{E2.2}, we obtain that
$$
\begin{gathered}
\int_{\Omega}\left(d_I \Delta I(t, x)+d_S \Delta S(t, x)+d_V \Delta V(t, x)\right) d x=0, \\
\int_{\Omega} \frac{\Delta S(t, x)}{S(t, x)} d x= \int_{\Omega} \frac{(\nabla S(t, x))^2}{S(t, x)^2} d x \geq 0,
\end{gathered}
$$
and
$$
\int_{\Omega} \frac{\Delta V(t, x)}{V(t, x)} d x= \int_{\Omega} \frac{(\nabla V(t, x))^2}{V(t, x)^2} d x \geq 0 .
$$
If $\mathcal{R}_0<1$, then
$$
\begin{aligned}
\frac{d L(t)}{d t} \leq& \int_{\Omega}\left\{-\alpha S_0\left(\phi\left(\frac{S_0}{S}\right)+\phi\left(\frac{V}{V_0}\right)+\phi\left(\frac{V_0 S}{S_0 V}\right)+2 \ln \left(\frac{V S_0}{V_0 S}\right)\right)\right. \\
&- \left.\mu S_0\left(\phi\left(\frac{S}{S_0}\right)+\phi\left(\frac{S_0}{S}\right)\right)+(\gamma+\mu+c)\left(\mathcal{R}_0-1\right)\right\} d x \leq 0.
\end{aligned}
$$
Thus, the disease-free equilibrium of \eqref{E4.1} is globally asymptotically stable.
\end{proof}

\begin{theorem}
If $\mathcal{R}_0 \geq 1$, then $E^*$ of system \eqref{E4.1} is globally asymptotically stable.
\end{theorem}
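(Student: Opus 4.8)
The plan is to mirror the disease-free analysis but to build a Volterra-type Lyapunov functional centered at the endemic state $E^*=(S^*,V^*,I^*)$ and then invoke LaSalle's invariance principle. Throughout I would reuse the convex function $\phi(x)=x-1-\ln x$, which is nonnegative and vanishes only at $x=1$, so that each term $\phi$ evaluated at a ratio of a state to its equilibrium value measures a ``distance'' to equilibrium. Before differentiating anything, I would record the three equilibrium identities obtained by annihilating the right-hand sides of \eqref{E4.1} at $E^*$, namely
$$\Lambda=S^*f(I^*)+(\mu+\alpha)S^*,\quad \alpha S^*=V^*h(I^*)+(\gamma_1+\mu)V^*,\quad (\gamma+\mu+c)I^*=S^*f(I^*)+V^*h(I^*),$$
which are the algebraic relations that let one eliminate the constants $\Lambda$, $\alpha S^*$ and $(\gamma+\mu+c)$ during the derivative computation.

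For the functional I would take $L(t)=\int_\Omega (L_1+L_2)\,dx$ with a pointwise part
$$L_1(t,x)=S^*\phi\!\left(\frac{S}{S^*}\right)+V^*\phi\!\left(\frac{V}{V^*}\right)+I^*\phi\!\left(\frac{I}{I^*}\right),$$
and a distributed-delay compensation part of double-integral form
$$L_2(t,x)=S^*f(I^*)\!\int_0^k\! g(\tau)\!\int_{t-\tau}^{t}\!\phi\!\left(\frac{S(u,x)f(I(u,x))}{S^*f(I^*)}\right)du\,d\tau+V^*h(I^*)\!\int_0^k\! g(\tau)\!\int_{t-\tau}^{t}\!\phi\!\left(\frac{V(u,x)h(I(u,x))}{V^*h(I^*)}\right)du\,d\tau.$$
The role of $L_2$ is that its time derivative telescopes, producing boundary contributions at $u=t$ and $u=t-\tau$ that cancel the lagged incidence terms $f(I(t-\tau,x))$ and $h(I(t-\tau,x))$ appearing once $\partial_t L_1$ is expanded through the equations, and leaving only logarithmic factors of incidence ratios that recombine with the pointwise $\phi$-terms.

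Next I would compute $\partial_t(L_1+L_2)$ by substituting \eqref{E4.1}, using $\int_0^k g(\tau)\,d\tau=1$ and the equilibrium identities to cancel the constant and linear terms. The diffusion contributions are handled exactly as in the disease-free case: integrating over $\Omega$ and applying Green's formula with the Neumann condition \eqref{E4.2}, the bare Laplacian terms integrate to zero while the terms weighted by $1/S$, $1/V$, $1/I$ yield nonpositive gradient contributions such as $-\int_\Omega d_S S^*|\nabla S|^2/S^2\,dx\le 0$. After this reduction, the surviving local part should organize into a sum of $-\phi$ terms in ratios such as $S^*/S$ and $V^*S/(S^*V)$, together with the genuinely new incidence terms specific to the endemic case.

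The main obstacle, and the step I would weigh most carefully, is proving that these leftover incidence terms are nonpositive using only $(H_1)$ and $(H_2)$. Since $f$ and $h$ are increasing and concave with $f(0)=h(0)=0$, the maps $I\mapsto f(I)/I$ and $I\mapsto h(I)/I$ are decreasing, which is exactly the sign information needed to bound expressions like $\big(1-f(I^*)/f(I)\big)\big(f(I)/f(I^*)-I/I^*\big)\le 0$ and thereby absorb the cross terms into the nonnegative $\phi$-groupings. Assembling everything gives $dL/dt\le 0$, with equality precisely on the set where $S=S^*$, $V=V^*$, $I=I^*$ and the delayed states coincide with the equilibrium; LaSalle's invariance principle then drives every trajectory into this set, so $E^*$ is globally asymptotically stable. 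One point I would reconcile is the stated hypothesis $\mathcal{R}_0\ge 1$ against the existence result, which needs $\mathcal{R}_0>1$: at $\mathcal{R}_0=1$ the endemic state degenerates into $E_0$ and the claim reduces to the previous theorem, so the construction is effectively carried out for $\mathcal{R}_0>1$.
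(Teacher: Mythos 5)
Your proposal follows essentially the same route as the paper: the same Volterra-type functional $S^*\phi(S/S^*)+V^*\phi(V/V^*)+I^*\phi(I/I^*)$ plus the same double-integral delay compensators built from $\phi$ of the incidence ratios, the same treatment of the diffusion terms via Green's formula and the Neumann condition, the same use of $(H_2)$ and $\ln x\le x-1$ to sign the leftover incidence terms, and the same appeal to LaSalle's invariance principle. Your closing remark about the mismatch between the stated hypothesis $\mathcal{R}_0\ge 1$ and the existence requirement $\mathcal{R}_0>1$ is a fair observation, but the overall argument is the one the paper gives.
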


\begin{proof}
Define
$$
H(t)=\int_{\Omega} (H_1(t, x)+H_2(t, x)) d x
$$
where
$$
\begin{aligned}
H_1(t, x)=&S^* \phi\left(\frac{S}{S *}\right)+V^* \phi\left(\frac{V}{V *}\right)+I^* \phi\left(\frac{I}{I *}\right), \\
H_2(t, x)=&S^* f\left(I^*\right) \int_{t-\tau}^t g(\tau) \int_0^\tau \phi\left(\frac{S(\theta) f\left(I(\theta)\right)}{S^* f\left(I^*)\right.}\right) d \theta d \tau \\
&+V^* h\left(I^*\right) \int_{t-\tau}^t g(\tau) \int_0^\tau \phi\left(\frac{V(\theta) h\left(I(\theta)\right)}{V^* h\left(I^*)\right.}\right) d \theta d \tau .
\end{aligned}
$$
Then,
$$
\frac{\partial H_1(t, x)}{\partial t}=\left(1-\frac{S^*}{S}\right) \frac{\partial S}{\partial t}+\left(1-\frac{V^*}{V}\right) \frac{\partial V}{\partial t}+\left(1-\frac{I^*}{I}\right) \frac{\partial I}{\partial t},
$$
and
$$
\begin{aligned}
\frac{\partial H_2(t, x)}{\partial t}=&\int_0^k g(\tau)\left(S f(I)-S(\tau) f\left(I(\tau)\right)+S^* f\left(I^*\right) \ln \left(\frac{S(\tau) f\left(I(\tau))\right.}{S f(I)}\right)\right) d\tau \\
&+\int_0^k g(\tau)\left(V h(I)-V(\tau) h\left(I(\tau)\right)+V^* h\left(I^*\right) \ln \left(\frac{V(\tau) h\left(I(\tau))\right.}{V h(I)}\right)\right) d\tau .
\end{aligned}
$$
Then,
$$
\begin{aligned}
\frac{\partial \left(H_1+H_2\right)(t, x)}{\partial t}=&-S^*(\mu+\alpha)\left(\frac{S}{S^*}+\frac{S^*}{S}-2\right)\\
&+\left(1-\frac{S^*}{S}\right)\left(S^* f\left(I^*\right)-\int_0^k g(\tau) S f\left(I(\tau)\right) d \tau\right)\\
&+\left(1-\frac{S^*}{S}\right) d_S \Delta S +\left(\gamma_1+\mu\right) V^*\left(\frac{S}{S^*}-\frac{S V^*}{V S^*}-\frac{V}{V^*}+1\right)\\
&+\left(1-\frac{V^*}{V}\right)\left(\frac{S V^* h\left(I^*\right)}{\left(\gamma_1+\mu\right) S^*}-\int_0^k g(\tau) V h\left(I(\tau)\right) d \tau\right)\\
&+\left(1-\frac{V^*}{V}\right) d_V \Delta V+S^* f\left(I^*\right)+V^* h\left(I^*\right)-\frac{S^* I f\left(I^*\right)}{I^*}\\
&-\frac{I V^* h\left(I^*\right)}{I^*} +\left(1-\frac{I^*}{I}\right)\left(\int_0^k g(\tau) S f\left(I(\tau)\right)+\int_0^k g(\tau) V h\left(I(\tau)\right)\right).
\end{aligned}
$$
Thus,
$$
\begin{aligned}
\frac{\partial \left(H_1+H_2\right)(t, x)}{\partial t}=&-S^*(\mu+\alpha) \phi\left(\frac{S^*}{S}\right)+\left(\gamma_1+\mu\right) V^*\left(-\phi\left(\frac{S V^*}{V S^*}\right)-\phi\left(\frac{V}{V^*}\right)\right) \\
&-S^* f\left(I^*\right) \int_0^k g(\tau)\left[\phi \left(\frac{S^*}{S}+\phi\left(\frac{I^* S f\left(I^*\right)}{I S^* f(I)}\right)+\phi\left(\frac{S(\tau) f\left(I(\tau)\right)}{S^* f\left(I^*\right)}\right)+1\right.\right. \\
&\left.-\frac{f\left(I(\tau)\right)}{f\left(I^*\right)}-\frac{S f(I)}{S^* f\left(I^*\right)}-\ln \left(\frac{I S(\tau) \left(f(I(\tau))\right)^3}{I S f(I) (f(I))^2}\right)\right] d\tau\\
& -V^* h\left(I^*\right) \int_0^k g(\tau)\left[\phi\left(\frac{S V^*}{\left(\gamma_1+\mu\right) S^* V}\right)+\phi\left(\frac{I}{I^*}\right)+\phi\left(\frac{I^* V h\left(I(\tau)\right)}{I V^* h\left(I^*\right)}\right)\right. \\
& \left.+\phi\left(\frac{V(\tau) h\left(\left(I(\tau)\right)\right.}{V^* h\left(I^*\right)}\right)-\frac{S V^*}{V S^*}-\frac{V}{V^*}-\frac{S}{\left(\gamma_1+\mu\right) S^*}-\frac{h\left(I(\tau)\right)}{h\left(I^*\right)}-1-\frac{V h(I)}{V^* h\left(I^*\right)}\right. \\
& \left.-\ln \left(\frac{(V(\tau))^2\left(h\left(I(\tau)\right)\right)^2}{\left(\gamma_1+\mu\right) V^* h(I)}\right)\right]d\tau+d_S \Delta S-\frac{S^*}{S} d_S \Delta S\\
& +d_V \Delta V-\frac{V^*}{V} d_V \Delta V+d_I \Delta I- \frac{I^*}{I} d_I \Delta I .
\end{aligned}
$$
Note that
$$
\int_{\Omega} d_S \Delta S(t, x)=\int_{\Omega} d_V \Delta V(t, x)=\int_{\Omega} d_I \Delta I(t, x)=0,
$$
and 
$$
\int_{\Omega} \frac{\Delta S}{S} d x=\int_{\Omega} \frac{\nabla S^2}{S^2} d x \geq 0, \ \int_{\Omega} \frac{\Delta V}{V} d x=\int_{\Omega} \frac{\nabla V^2}{V^2} d x \geq 0, \ \int_{\Omega} \frac{\Delta I}{I} d x=\int_{\Omega} \frac{\nabla I^2}{I^2} d x \geq 0,
$$
then,
$$
\begin{aligned}
\frac{d H(t)}{d t}=&\int_{\Omega} \frac{\partial \left(H_1(t, x)+H_2(t, x)\right)}{\partial t} d x\\
\leq& \int_{\Omega}-S^*(\mu+\alpha) \phi\left(\frac{S^*}{S}\right)+\left(\gamma_1+\mu\right) V^*\left(-\phi\left(\frac{S V^*}{V S^*}\right)-\phi\left(\frac{V}{V^*}\right)\right) \\
&-S^* f\left(I^*\right) \int_0^k g(\tau)\left[\phi \left(\frac{S^*}{S}+\phi\left(\frac{I^* S f\left(I^*\right)}{I S^* f(I)}\right)+\phi\left(\frac{S(\tau) f\left(I(\tau)\right)}{S^* f\left(I^*\right)}\right)+1-\frac{f\left(I(\tau)\right)}{f\left(I^*\right)}\right.\right. \\
&\left.-\frac{S f(I)}{S^* f\left(I^*\right)}-\ln \left(\frac{I S(\tau) \left(f(I(\tau))\right)^3}{I S f(I) (f(I))^2}\right)\right] d\tau -V^* h\left(I^*\right) \int_0^k g(\tau)\left[\phi\left(\frac{S V^*}{\left(\gamma_1+\mu\right) S^* V}\right)\right.\\
&\left.+\phi\left(\frac{I}{I^*}\right)+\phi\left(\frac{I^* V h\left(I(\tau)\right)}{I V^* h\left(I^*\right)}\right)+\phi\left(\frac{V(\tau) h\left(\left(I(\tau)\right)\right.}{V^* h\left(I^*\right)}\right)\right. \left.-\frac{S V^*}{V S^*}-\frac{V}{V^*}-\frac{S}{\left(\gamma_1+\mu\right) S^*}\right.\\
&\left.-\frac{h\left(I(\tau)\right)}{h\left(I^*\right)}-1-\frac{V h(I)}{V^* h\left(I^*\right)}-\ln \left(\frac{(V(\tau))^2\left(h\left(I(\tau)\right)\right)^2}{\left(\gamma_1+\mu\right) V^* h(I)}\right)\right] d\tau.
\end{aligned}
$$
By assumption $\left(H_2\right)$ and $\ln (x) \leq x-1$, we can get
$$
\frac{f\left(I(\tau)\right)}{f\left(I^*\right)}+\frac{S f(I)}{S^* f\left(I^*\right)}-\ln \left(\frac{I S(\tau) \left(f(I(\tau))\right)^3}{I S f(I) (f(I))^2}\right) \leq \frac{f\left(I(\tau)\right)}{f\left(I^*\right)}+\frac{S f(I)}{S^* f\left(I^*\right)}+\frac{I S(\tau) \left(f(I(\tau))\right)^3}{I S f(I) (f(I))^2}-1 \leq 0,
$$
and
$$
\begin{aligned}
\frac{S V^*}{V S^*}+&\frac{V}{V^*}+\frac{S}{\left(\gamma_1+\mu\right) S^*}+\frac{h\left(I(\tau)\right)}{h\left(I^*\right)}+1+\frac{V h(I)}{V^* h\left(I^*\right)}+\ln \left(\frac{(V(\tau))^2\left(h\left(I(\tau)\right)\right)^2}{\left(\gamma_1+\mu\right) V^* h(I)}\right) \\
\leq & \frac{S V^*}{V S^*}+\frac{V}{V^*}+\frac{S}{\left(\gamma_1+\mu\right) S^*}+\frac{h\left(I(\tau)\right)}{h\left(I^*\right)}+\frac{V h(I)}{V^* h\left(I^*\right)}+\frac{V(\tau)^2\left(h\left(I(\tau)\right)\right)^2}{\left(\gamma_1+\mu\right) V^* h(I)}.
\end{aligned}
$$
Hence, $\frac{d H(t)}{d t} \leq 0$.
By applying LaSalle's invariance principle (See \cite{LaSalle1976}), we conclude that the endemic equilibrium point of system \eqref{E4.1} is globally asymptotically stable. The proof is complete.
\end{proof}


\section{Numerical results}\label{S6}

The numerical simulations for the proposed model \eqref{E2.1} with \eqref{E2.2} have been solved using MATLAB. We have employed the finite difference method (FDM) to approximate the solution of the PDE system in one spatial variable $x$ and time $t$. In the FDM, the spatial domain $0 \leq x \leq 1$ is discretized into a grid with $N + 1$ equally spaced points $x_k = kh$ for $k = 0, 1, . . . , N$, where $h = \frac{1}{N}$ denotes the uniform grid width. Moreover, the diffusion term in model \eqref{E2.1} has been discretized using a second-order central difference formula, as outlined by \cite{Mohd2019}
$$
\Delta y \approx \frac{y_{k+1} - 2y_k + y_{k-1}}{h^2}.
$$
This results in a system of $3(N + 1)$ ODEs, with one equation for each compartment discretized across uniformly spaced spatial nodes. The solution of this extensive ODE system is computed until a steady state is reached (i.e., until $t = 1500$). The grid spacing is set as $h = 10^{-2}$, and the parameter values are as indicated in Table \ref{Tab1} with $f(I) = \beta_1 I$ and $h(I) = \beta_2 I$.

\begin{table}[hbtp]
\centering
\caption{Parameter values and initial conditions of \eqref{E2.1}.}\label{Tab1}
\begin{tabular}{ccc}
\hline Symbol & Description & Value\\
\hline 
$\Lambda$ & Birth (or demographic) rate of the population & $0.392465$\\
$\beta_1$ & Transmission rate for the susceptible & $0.002$ (for $\mathcal{R}_0 > 1$)\\ 
& & $0.0008$ (for $\mathcal{R}_0 < 1$)\\
$\beta_2$ & Transmission rate for the vaccinated & $0.0016$ (for $\mathcal{R}_0 > 1$)\\ 
& & $0.00064$ (for $\mathcal{R}_0 < 1$)\\
$\alpha$ & The rate of vaccination of susceptible & $0.005$\\
$\mu$ & Natural death rates & $0.001$\\
$\gamma_1$ & Recovered rate of the vaccinated & $0.005$\\
$\gamma$ & Recovered rate of the infected & $0.009$\\
$c$ & Disease-induced death rate & $0.09$\\
$d_S = d_V = d_I = d_R$ & Diffusion coefficients & $0.1$\\ 
$S_0$ & Initial susceptible individuals & $30$\\ 
$V_0$ & Initial vaccinated individuals & $10$\\ 
$I_0$ & Initial infected individuals & $5$\\
$R_0$ & Initial recovered individuals & $0$\\
\hline
\end{tabular}
\end{table}

In Figure \ref{F1}, the condition $\mathcal{R}_0 = 0.8721 < 1$ is illustrated for the given parameters $\beta_1 = 0.0008$ and $\beta_2 = 0.00064$. In this scenario, the disease-free equilibrium is shown to be asymptotically stable, and the system's solutions converge towards a state with no disease.\\

On the other hand, in Figure \ref{F2}, the case where $\mathcal{R}_0 = 2.1804 > 1$ is depicted, considering the parameters $\beta_1 = 0.002$ and $\beta_2 = 0.0016$. In this context, the endemic equilibrium is demonstrated to be asymptotically stable, and the system's solutions tend towards a positive balance.

\begin{figure}[hbtp]
\centering
\includegraphics[scale=0.425]{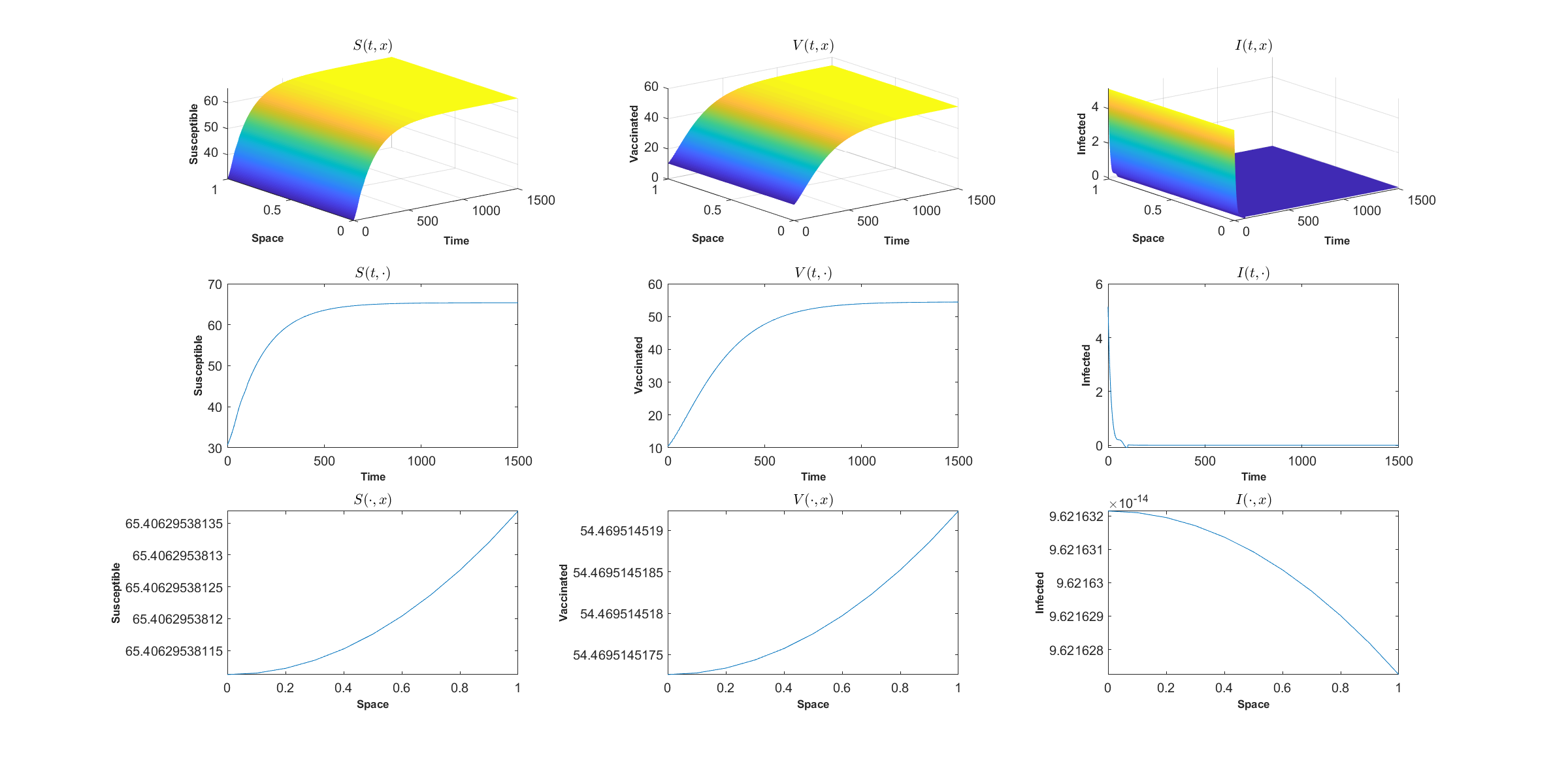}
\caption{Numerical results of \eqref{E2.1}-\eqref{E2.2} when $\mathcal{R}_0 < 1$.}\label{F1}
\end{figure}

\begin{figure}[hbtp]
\centering
\includegraphics[scale=0.425]{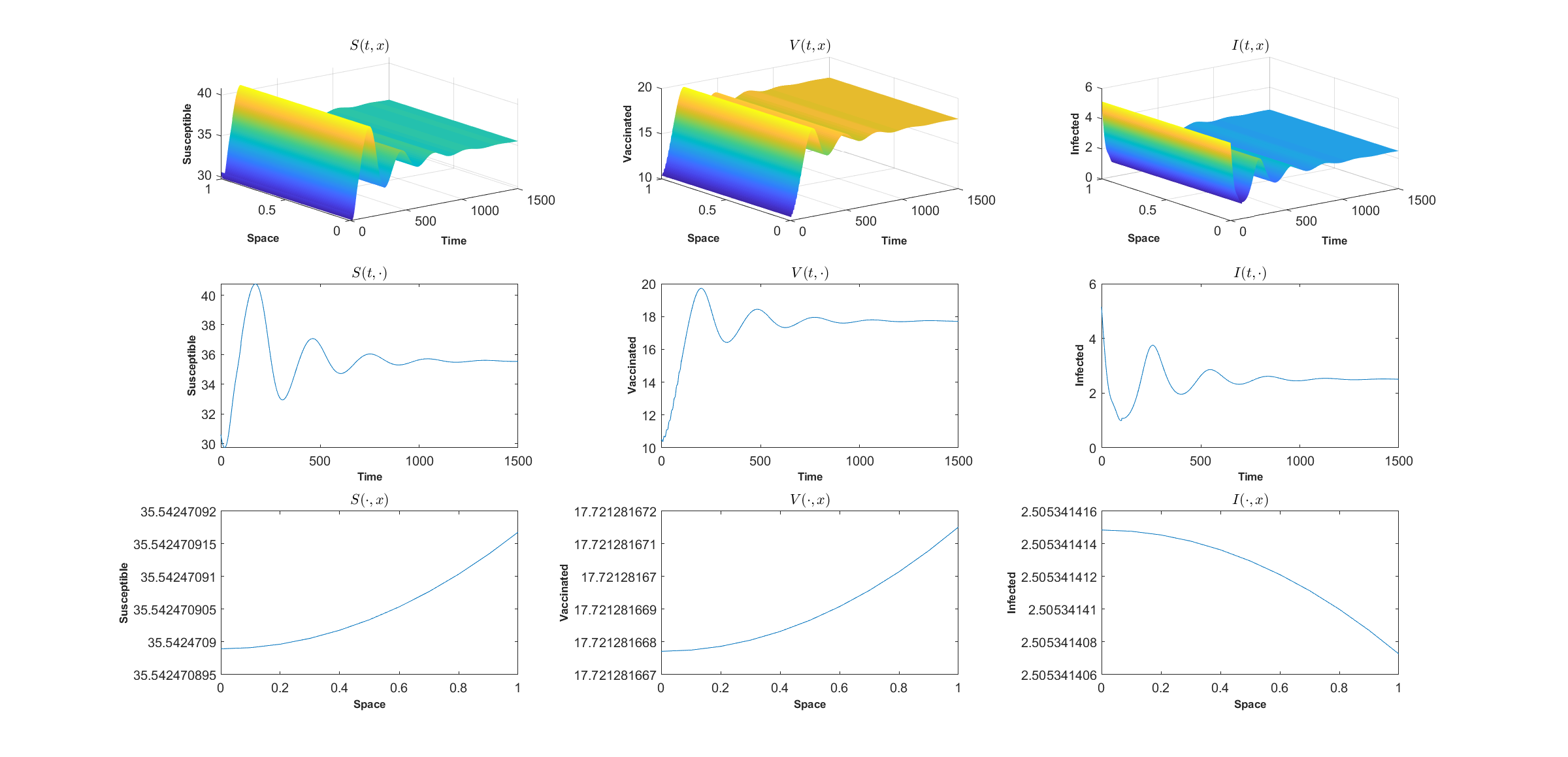}
\caption{Numerical results of \eqref{E2.1}-\eqref{E2.2} when $\mathcal{R}_0 > 1$.}\label{F2}
\end{figure}

\newpage

\section{Conclusion and discussion}\label{S7}

In conclusion, this paper has investigated a reaction-diffusion SVIR infection model with distributed delay and nonlinear incidence rate. The well-posedness of the model was established, and through Lyapunov functionals, we demonstrated the global asymptotic stability of both the disease-free and endemic equilibrium states. 
The incorporation of delay in the SVIR model with reaction-diffusion yields several significant insights into the dynamics of infectious diseases within spatially distributed populations. The introduction of delay mechanisms captures the time lag between infection, onset of symptoms, and subsequent interventions. Moreover, it leads to the emergence of new dynamical behaviors, such as the existence of multiple equilibrium states or the occurrence of sustained oscillations. These phenomena arise due to the interplay between the intrinsic disease dynamics, spatial diffusion, and the time delay in the system.
Numerical simulations further illustrated these theoretical findings, showcasing the behavior of the system under different scenarios of the basic reproduction number $\mathcal{R}_0$. Specifically, when $\mathcal{R}_0 < 1$, the disease-free equilibrium was shown to be asymptotically stable, leading to the eventual eradication of the disease. Conversely, for $\mathcal{R}_0 > 1$, the endemic equilibrium was found to be asymptotically stable, resulting in a persistent state of infection within the population. These results contribute to our understanding of the dynamics of infectious diseases and have implications for public health interventions aimed at disease control and prevention. Further research could explore the effects of additional factors, such as spatial heterogeneity or seasonal variations, on the dynamics of epidemic models with distributed delay, providing valuable insights for disease management strategies in real-world scenarios.


\section*{Declarations}

\subsection*{Acknowledgments}
The authors express their appreciation to the reviewers for their valuable comments.
This work is carried out under the supervision of CNRST as part of the PASS program.

\subsection*{Funding} 

The authors assert that no funding was received for the preparation and execution of this manuscript.

\subsection*{Data availability} 

All the information and data that were analyzed or generated to support the results of this work are provided within this manuscript.

\subsection*{Conflict of interest} 

The authors declare that there are no problems or conflicts of interest between them that may affect the study in this paper.


\bibliographystyle{acm}
\bibliography{paper}

\end{document}